\newtheorem{theorem}{Theorem}
\newtheorem{proposition}{Proposition}[section]
\newtheorem{lemma}[proposition]{Lemma}
\newtheorem{corollary}[proposition]{Corollary}
\theoremstyle{definition}
\newtheorem{definition}[proposition]{Definition}
\newtheorem{example}[proposition]{Example}
\newcommand{\p}{\mathbb{P}}
\renewcommand{\k}{\mathrm{k}}
\newcommand{\Aut}{\mathrm{Aut}}
\newcommand{\A}{\mathbb{A}}
\newcommand{\C}{\mathbb{C}}
\newcommand{\car}{\mathrm{char}}
\newcommand\dashmapsto{\mapstochar\dashrightarrow}
\DeclareMathOperator{\Bir}{Bir}
\title{Algebraic elements of the Cremona groups}
\author{J\'er\'emy Blanc}
\address{Mathematisches Institut \\ 
Universit\"at Basel \\
Spiegelgasse 1 \\
CH-4051 Basel \\
Switzerland}
\email{jeremy.blanc@unibas.ch}
\thanks{The author acknowledges support by the Swiss National Science Foundation Grant  "Birational Geometry" PP00P2\_153026.}
\date{\today}
\begin{document}
\begin{abstract}This article studies algebraic elements of the Cremona group. In particular, we show that the set of all these elements is a countable union of closed subsets but it is not closed.\end{abstract}
\maketitle

\section{Introduction}
In the sequel, the ground field~$\k$ will be a fixed algebraically closed field. The Cremona group of rank~$n$ is the group~$\Bir(\p^n)$ of birational transformations of the projective space~$\p^n$. 
There is a natural topology on it, called the \emph{Zariski topology} (see Section~\ref{Sec:Families} for a precise definition). 

 An element~$\varphi\in \Bir(\p^n)$ is said to be \emph{algebraic} if it is contained in an algebraic subgroup~$G$ of~$\Bir(\p^n)$. This is equivalent to the fact that the sequence~$\{\deg(\varphi^m)\}_{m\in \mathbb{N}}$ is bounded (Corollary~\ref{Cor:AlgSequenceBounded}). We can also observe that~$G$ is in this case an affine algebraic group \cite[Remark 2.21]{BFannals}, so there exists a Jordan decomposition~$\varphi=\varphi_s \varphi_u$, where~$\varphi_s,\varphi_u\in G$ are semi-simple and unipotent respectively. As observed in \cite[\S9.1]{Popov}, this decomposition does not depend on the choice of~$G$, so there is a natural notion of semi-simple and unipotent elements of~$\Bir(\p^n)$. In fact, the group~$G$ could even be chosen to be the commutative algebraic subgroup~$\overline{\{\varphi^i\mid i\in \mathbb{Z}\}}$ of~$\Bir(\p^n)$ (Proposition~\ref{Prop:SeqBoundedOrNot}).

In \cite{Popov},  V. L. Popov asks whether the set of unipotent elements of~$\Bir(\p^n)$ is closed, as it is the case in all linear algebraic groups. This also raises the question of knowing if the set of algebraic elements is in fact closed.

After giving some properties of the Zariski topology of~$\Bir(\p^n)$ in Section~\ref{Sec:Families}, we describe in Section~\ref{Sec:Examples} two families of birational maps that give the following result:
\begin{theorem}
For each~$n\ge 2$, there are two closed subsets~$U,S\subset \Bir(\p^n)$, canonically homeomorphic to~$\A^1$ and~$\A^1\times (\A^1\setminus \{0\})$ respectively, such that the following holds:
\begin{enumerate}[$(1)$]
\item
The set of algebraic elements of~$U$ is equal to the set of unipotent elements of~$U$, and corresponds to the elements~$t\in \A^1$ that belong to the subgroup of~$(\k,+)$ generated by~$1$.
\item
The set of algebraic elements of~$S$ is equal to the set of semi-simple elements of~$S$, and corresponds to the elements~$(a,\xi)\in \A^1\times (\A^1\setminus \{0\})$ such that~$a=\xi^k$ for some~$k\in \mathbb{Z}$.
\end{enumerate}
In particular, the set~$\Bir(\p^n)_{\mathrm{alg}}$ of algebraic elements of~$\Bir(\p^n)$ is not closed in~$\Bir(\p^n)$. Moreover, if~$\car(\k)=0$, the set of unipotent elements of~$\Bir(\p^n)$ is not closed.
\end{theorem}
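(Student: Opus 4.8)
The plan is to exhibit $U$ and $S$ as explicit families of de Jonquières maps whose iterates can be computed by hand, so that Corollary~\ref{Cor:AlgSequenceBounded} identifies the algebraic elements; the two non‑closedness assertions will then follow by elementary topology. I would first reduce to $n=2$: having built the families in $\Bir(\p^2)$, I pass to $\Bir(\p^n)$ for $n\ge3$ via the suspension $\varphi\mapsto\varphi\times\mathrm{id}_{\A^{n-2}}$ on $\A^n=\A^2\times\A^{n-2}\subset\p^n$, checking (routinely) that $\{\deg((\varphi\times\mathrm{id})^m)\}_m$ is bounded iff $\{\deg(\varphi^m)\}_m$ is, that the suspension carries algebraic subgroups to algebraic subgroups and respects the Jordan decomposition, and that it takes closed families to closed families, so that every property in the statement is inherited from dimension $2$.

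On the chart $\A^2\subset\p^2$ I would take the de Jonquières maps preserving the pencil of lines $x=\mathrm{const}$
\[\tau_t\colon(x,y)\dashmapsto\Bigl(x+1,\ \tfrac{xy}{x-t}\Bigr)\ (t\in\A^1),\qquad \sigma_{a,\xi}\colon(x,y)\dashmapsto\Bigl(\xi x,\ y\cdot\tfrac{x-1}{x-a}\Bigr)\ \bigl((a,\xi)\in\A^1\times(\A^1\setminus\{0\})\bigr),\]
and set $U=\{\tau_t\}$ and $S=\{\sigma_{a,\xi}\}$ (possibly after adjusting the normalisation). A one-line induction gives
\[\tau_t^m\colon(x,y)\mapsto\Bigl(x+m,\ y\prod_{i=0}^{m-1}\tfrac{x+i}{x+i-t}\Bigr),\qquad \sigma_{a,\xi}^m\colon(x,y)\mapsto\Bigl(\xi^m x,\ y\prod_{i=0}^{m-1}\tfrac{\xi^i x-1}{\xi^i x-a}\Bigr).\]

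If $t=k\cdot1$ with $k\in\mathbb Z$, the product in $\tau_t^m$ telescopes to a rational function of $x$ of degree $|k|$, independent of $m$; equivalently, conjugating $\tau_t$ by the fixed birational map $(x,y)\mapsto\bigl(x,\,y\prod_{j=1}^{k}(x-j)^{-1}\bigr)$ (and analogously for $k\le0$) turns it into the linear map $(x,y)\mapsto(x+1,y)$. Thus $\{\deg\tau_t^m\}_m$ is bounded, so $\tau_t$ is algebraic by Corollary~\ref{Cor:AlgSequenceBounded}, and $\tau_t$ is \emph{unipotent}, being conjugate to the unipotent translation $(x,y)\mapsto(x+1,y)$. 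If $t\notin\mathbb Z\cdot1$, the $2m$ points $x=-i$ and $x=t-i$ with $0\le i<m$ are pairwise distinct, so the reduced $y$-component of $\tau_t^m$ has degree $m$ in $x$ and $\deg\tau_t^m\to\infty$; in characteristic $p$ one argues instead from $\tau_t^p\colon(x,y)\mapsto\bigl(x,\,y\,\tfrac{x^p-x}{(x^p-x)-(t^p-t)}\bigr)$, where $t^p-t\ne0$ and the $y$-component gets raised to the $m$th power under $\tau_t^{pm}$. Hence the algebraic elements of $U$ are the $\tau_t$ with $t\in\mathbb Z\cdot1$, all unipotent; since unipotent elements are always algebraic, the algebraic and the unipotent elements of $U$ coincide, giving $(1)$. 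An entirely parallel discussion handles $\sigma_{a,\xi}$: when $a=\xi^k$, conjugation by $(x,y)\mapsto\bigl(x,\,y\prod_{j=1}^{k}(x-\xi^j)^{-1}\bigr)$ turns it into the semi-simple torus element $(x,y)\mapsto(\xi x,\xi^{-k}y)$, while when $a\notin\{\xi^k\mid k\in\mathbb Z\}$ the iterates have unbounded degree, so the algebraic and the semi-simple elements of $S$ coincide and are the $\sigma_{a,\xi}$ with $a=\xi^k$, giving $(2)$ — except, in both cases, for the topological claims.

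Establishing those topological claims, using the material of Section~\ref{Sec:Families}, is the step I expect to be the main obstacle: one must show that $t\mapsto\tau_t$ and $(a,\xi)\mapsto\sigma_{a,\xi}$ are morphisms of algebraic families that are injective with closed image, so that $U$ and $S$ are closed in $\Bir(\p^n)$ and canonically homeomorphic to $\A^1$ and $\A^1\times(\A^1\setminus\{0\})$. Granting this, $U_{\mathrm{alg}}:=U\cap\Bir(\p^n)_{\mathrm{alg}}$ corresponds to $\mathbb Z\cdot1\subset\A^1$ and $S_{\mathrm{alg}}:=S\cap\Bir(\p^n)_{\mathrm{alg}}$ to $Z:=\{(a,\xi)\mid a=\xi^k\text{ for some }k\in\mathbb Z\}$. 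The set $Z$ is not closed in the irreducible surface $X:=\A^1\times(\A^1\setminus\{0\})$: it contains the pairwise distinct irreducible curves $Z_k=\{(\xi^k,\xi)\mid\xi\ne0\}$ for all $k\in\mathbb Z$, yet $Z\ne X$ since $(0,1)\notin Z$; were $Z$ closed, then — $Z$ being Noetherian and $\ne X$ — it would have finitely many irreducible components, each of dimension $\le1$ (a $2$-dimensional one would be $X$), hence contain only finitely many distinct irreducible curves, a contradiction. Since $S$ is closed in $\Bir(\p^n)$, closedness of $\Bir(\p^n)_{\mathrm{alg}}$ would force $S_{\mathrm{alg}}$, and therefore $Z$, to be closed; so $\Bir(\p^n)_{\mathrm{alg}}$ is not closed. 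Finally, if $\car(\k)=0$ then $\mathbb Z\cdot1=\mathbb Z$ is an infinite proper subset of $\A^1$, hence not closed; by $(1)$ the set of unipotent elements of $U$ equals $U_{\mathrm{alg}}$, corresponding to $\mathbb Z$, so if the unipotent elements of $\Bir(\p^n)$ formed a closed set, intersecting it with the closed set $U$ would exhibit $\mathbb Z$ as closed in $\A^1$ — again impossible. $\square$
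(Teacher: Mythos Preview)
Your approach is essentially the paper's: your $\tau_t$ and $\sigma_{a,\xi}$ are minor reparametrisations of Examples~\ref{TheExample} and~\ref{TheSecondExample}, and the computation of iterates, the conjugation to a linear model, and the non-closedness arguments all match. Two differences deserve comment. First, the reduction to $n=2$ via suspension is a detour the paper avoids by writing the families directly in $\Bir(\p^n)$, letting the extra coordinates $x_3,\dots,x_n$ be carried along trivially; this sidesteps having to prove that suspension takes closed families to closed families, a claim you call routine but which does require justification in the Zariski topology of Definition~\ref{defi: Zariski topology}. Second, the step you correctly flag as the main obstacle---that the parametrisations are closed topological embeddings---is exactly what the paper settles in Lemma~\ref{Lem:Topemb1} and its semi-simple counterpart: one extends the family to a closed linear embedding of $\p^1$ (resp.\ $\p^2$) into the projective space $W_2$ of Definition~\ref{DefWHG}, checks that the part landing in $H_2$ is precisely $\A^1$ (resp.\ $\A^1\times(\A^1\setminus\{0\})$), and then uses that $\pi_2\colon H_2\to\Bir(\p^n)_{\le 2}$ is closed and continuous (Proposition~\ref{Prop:StructureHd}), together with injectivity on this locus, to conclude.
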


Let us finish this introduction with some remarks:
\begin{enumerate}
\item
The set~$\Bir(\p^n)_{\mathrm{alg}}$ is a countable union of closed sets of~$\Bir(\p^n)$ (Proposition~\ref{Prop:Union}).
\item
We do not know if the set of unipotent elements of~$\Bir(\p^n)$ is closed in~$\Bir(\p^n)_{\mathrm{alg}}$ (although it is not closed in~$\Bir(\p^n))$.
\item
One can restrict ourselves to the subgroup~$\Aut(\A^n)\subset \Bir(\A^n)\simeq \Bir(\p^n)$. 
Over~$\k=\C$, it follows from \cite{FurterIt} that the set of algebraic elements of~$\Aut(\A^2_\C)$ is closed in~$\Aut(\A^2)$. The question is however open for~$\A^n$,~$n\ge 3$.
\end{enumerate}

The author thanks the referee for his careful reading and his corrections.

\section{A few properties of the Zariski topology of~$\Bir(\p^n)$}
\label{Sec:Families}
\subsection{Families of birational maps and the Zariski topology induced}
We recall the notion of families of birational maps, introduced by M.~Demazure in \cite{De} (see also \cite{Se}, \cite{BFannals}).

\begin{definition} \label{Defi:Family}
Let~$A,X$ be irreducible algebraic varieties, and let~$f$ be a~$A$-birational map of the~$A$-variety~$A\times X$, inducing an isomorphism~$U\to V$, where~$U,V$ are open subsets of~$A\times X$, whose projections on~$A$ are surjective.

The rational map~$f$ is given by~$(a,x)\dashmapsto (a,p_2(f(a,x)))$, where~$p_2$ is the second projection, and for each~$\k$-point~$a\in A$, the birational map~$x\dashmapsto p_2(f(a,x))$ corresponds to an element ~$f_a\in \Bir(X)$.
The map~$a\mapsto f_a$ represents a map from~$A$~$($more precisely from the~$A(k)$-points of~$A)$ to~$\Bir(X)$, and will be called a \emph{morphism} from~$A$ to~$\Bir(X)$.
\end{definition}
These notions yield the natural Zariski topology on~$\Bir(X)$, introduced by M.~Demazure \cite{De} and J.-P. Serre \cite{Se}:
\begin{definition}  \label{defi: Zariski topology}
A subset~$F\subseteq \Bir(X)$ is closed in the Zariski topology
if for any algebraic variety~$A$ and any morphism~$A\to \Bir(X)$ the preimage of~$F$ is closed.
\end{definition}
We can make the following simple observations:

\begin{lemma}\label{Lem:Continuous}Let~$X,Y$ be irreducible algebraic varieties, let~$\mu\colon X\dasharrow Y$ and~$\psi\colon X\dasharrow X$ be birational maps and let~$m\in \mathbb{Z}$ be some integer. The following maps are continuous
$$\begin{array}{ll}
\begin{array}{rrcl}
1) &\Bir(X)&\to& \Bir(X),\\
&\varphi & \mapsto & \psi\varphi\end{array} & \begin{array}{rrcl}
2)& \Bir(X)&\to& \Bir(X),\\
&\varphi & \mapsto & \varphi\psi\end{array}\\ \begin{array}{rrcl}
\vphantom{\Big)}3)& \Bir(X)&\to& \Bir(X),\\
&\varphi & \mapsto & \varphi^{m}\end{array} &\begin{array}{rrcl}
4)& \Bir(X)&\to& \Bir(X)\\
&\varphi & \mapsto & \varphi\psi\varphi^{-1},\end{array}\\
\begin{array}{rrcl}5)& \Bir(X)&\to& \Bir(Y)\\
&\varphi & \mapsto & \mu\varphi\mu^{-1},\end{array}\\
\end{array}$$
\end{lemma}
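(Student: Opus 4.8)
The plan is to prove Lemma~\ref{Lem:Continuous} by reducing everything to the definition of the Zariski topology: a map $\Phi\colon \Bir(X)\to \Bir(Y)$ is continuous if and only if, for every irreducible algebraic variety~$A$ and every morphism $A\to \Bir(X)$ given by a family $f$ as in Definition~\ref{Defi:Family}, the composite $A\to \Bir(X)\xrightarrow{\Phi} \Bir(Y)$ is again a morphism in the sense of Definition~\ref{Defi:Family}, i.e.\ is represented by an $A$-birational map of $A\times Y$ whose domain and image project surjectively onto~$A$. Indeed, if $\Phi$ sends morphisms to morphisms, then for any closed $F\subseteq\Bir(Y)$ and any morphism $g\colon A\to\Bir(X)$, the preimage $(\Phi\circ g)^{-1}(F)$ is closed in~$A$ because $\Phi\circ g$ is a morphism and $F$ is closed; hence $\Phi^{-1}(F)$ is closed. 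So the whole lemma follows once we check that each of the five operations transforms a family into a family.

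Concretely, I would fix a morphism $A\to\Bir(X)$ corresponding to an $A$-birational self-map $f$ of $A\times X$, restricting to an isomorphism $U\to V$ between open subsets surjecting onto~$A$, with $f_a\in\Bir(X)$ the induced element for each $\k$-point $a\in A$. For case~(2), $\varphi\mapsto\varphi\psi$: post-compose $f$ with $\mathrm{id}_A\times\psi\colon A\times X\dashrightarrow A\times X$ on the source side; this is again an $A$-birational map of $A\times X$, the projections of its domain and image to~$A$ are still surjective (they differ from those of~$f$ only by the fixed birational map~$\psi$, which changes them by removing a proper closed subset, and surjectivity onto~$A$ is preserved because the generic point still maps dominantly), and its fiber over $a$ is $f_a\circ\psi$. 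Case~(1), $\varphi\mapsto\psi\varphi$, is symmetric, pre-composing with $\mathrm{id}_A\times\psi^{-1}$ on the target side (equivalently replacing $p_2\circ f$ by $\psi\circ p_2\circ f$). Case~(3) is the composition of $f$ with itself $m$ times when $m\ge 0$ (and with the inverse family for $m<0$, noting that the inverse of a family is a family since $f^{-1}$ is again $A$-birational with domain~$V$ and image~$U$), giving fiber $(f_a)^m$. Case~(4) is the composition of cases (1)–(3): $\varphi\mapsto\varphi\psi\varphi^{-1}$ factors through $\varphi\mapsto\varphi$, then conjugation, which is $x\mapsto \psi$-conjugate; more directly it is the family $f\circ(\mathrm{id}_A\times\psi)\circ f^{-1}$, whose fiber over~$a$ is $f_a\psi f_a^{-1}$. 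Case~(5), $\varphi\mapsto\mu\varphi\mu^{-1}$ with $\mu\colon X\dashrightarrow Y$, is handled by transporting the family along $\mathrm{id}_A\times\mu\colon A\times X\dashrightarrow A\times Y$: the $A$-birational map $(\mathrm{id}_A\times\mu)\circ f\circ(\mathrm{id}_A\times\mu)^{-1}$ of $A\times Y$ has fiber $\mu f_a\mu^{-1}\in\Bir(Y)$, and again the domain and image project onto~$A$.

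The one point requiring a little care—and the main (mild) obstacle—is the bookkeeping of open domains: after composing with a fixed birational map such as $\psi$ or $\mu$, one must verify that the resulting $A$-birational map still restricts to an isomorphism between open subsets of $A\times X$ (resp.\ $A\times Y$) whose projections to~$A$ are surjective, rather than merely dominant or empty over some fibers. This is where one uses that $A$ is irreducible: the indeterminacy locus of the fixed map $\psi$ pulls back, under the dominant family map, to a proper closed subset of $A\times X$, which cannot contain a whole fiber over the generic point; shrinking the open sets $U,V$ accordingly keeps surjectivity onto~$A$. I would phrase this once as a small auxiliary observation ("the composition of a family $A\to\Bir(X)$ with left or right translation by a fixed element of $\Bir(X)$, or with conjugation by a fixed birational map $X\dashrightarrow Y$, is again a family") and then read off (1), (2), (4), (5) from it, while (3) follows by iterating composition of families (composition of two families over the same base~$A$ being a family, again by an analogous domain-shrinking argument, or by citing the corresponding statement in \cite{BFannals} or \cite{Se}). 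Everything else is formal.
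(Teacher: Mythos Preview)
Your proposal is correct and follows essentially the same approach as the paper's proof: both argue that each of the five maps is continuous by showing that precomposition with an arbitrary morphism $A\to\Bir(X)$ (given by an $A$-birational family $f$) yields again a morphism, via the explicit families $(\mathrm{id}\times\psi)\circ f$, $f\circ(\mathrm{id}\times\psi)$, $f^{\circ m}$, $f\circ(\mathrm{id}\times\psi)\circ f^{-1}$, and $(\mathrm{id}\times\mu)\circ f\circ(\mathrm{id}\times\mu)^{-1}$. The paper treats the surjectivity-onto-$A$ bookkeeping as evident, whereas you spell it out; your ``if and only if'' characterisation of continuity is stronger than needed (only the ``if'' direction is used and is what you actually justify), but this does not affect the argument.
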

\begin{proof}Let ~$A$ be an irreducible algebraic variety.
If ~$f,g$ are two~$A$-birational maps~$f,g\colon A\times X\dasharrow A\times X$  inducing morphisms~$A\to \Bir(X)$, then~$f\circ g$ and~$f^{-1}$ are again~$A$-birational maps that induce morphisms~$A\to \Bir(X)$. This shows that the map~$\Bir(X)\to \Bir(X)$ given by~$\varphi\mapsto \varphi^m$ is continuous. Similarly,~$(\mathrm{id}\times \psi )\circ f$,~$f\circ (\mathrm{id}\times \psi )$ and~$f\circ (\mathrm{id}\times \psi )\circ f^{-1}$ are~$A$-birational maps that induce morphisms~$A\to \Bir(X)$, so the maps~$\Bir(X)\to \Bir(X)$ given by~$\varphi\mapsto \varphi\psi$, ~$\varphi\mapsto \psi\varphi$ and~$\varphi\psi\varphi^{-1}$ are continuous. The continuity of the last map is given in a similar way, by observing that~$(\mathrm{id}\times \mu^{-1} )\circ f\circ (\mathrm{id}\times \mu^{-1} )$ also yields a~$A$-birational map that induces a morphism~$A\to \Bir(X)$.
\end{proof}
\begin{corollary}\label{Cor:ClosureSubgroup}
Let~$\varphi\in \Bir(X)$. Denote by~$F$ the closure of~$\{\varphi^i\mid i\in \mathbb{Z}\}$ in~$\Bir(X)$. Then,~$F$ is a closed abelian  subgroup of~$\Bir(X)$.
\end{corollary}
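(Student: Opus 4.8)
The plan is to imitate the classical argument that the closure of a subgroup of a topological group is again a subgroup, while bearing in mind that $\Bir(X)$ is \emph{not} a topological group --- the composition $\Bir(X)\times\Bir(X)\to\Bir(X)$ is not continuous --- so that only the one--variable operations of Lemma~\ref{Lem:Continuous}, together with their analogues obtained from the same $A$-family construction, are at my disposal. I will write $H=\{\varphi^i\mid i\in\mathbb{Z}\}$ for the cyclic (hence abelian) subgroup generated by $\varphi$, so that $F=\overline{H}$ and $F$ is closed by construction. Three elementary facts will be used: a continuous self-map of $\Bir(X)$ with continuous inverse is a homeomorphism, and homeomorphisms send closures to closures; the point $\{\mathrm{id}\}$ is closed in $\Bir(X)$ (for any $A$-birational map $f$ inducing a morphism $A\to\Bir(X)$, the locus $\{a\mid f_a=\mathrm{id}\}$ is closed, by upper semicontinuity of the fibre dimension of the graph of $f$); and if $S\subseteq g^{-1}(C)$ with $g$ continuous and $C$ closed, then $\overline{S}\subseteq g^{-1}(C)$.

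First I would check that $F$ is a subgroup. Inversion $\iota\colon\varphi'\mapsto(\varphi')^{-1}$ is continuous by Lemma~\ref{Lem:Continuous}(3) (with $m=-1$) and is its own inverse, hence a homeomorphism; as $\iota(H)=H$, this gives $\iota(F)=\overline{\iota(H)}=F$, i.e. $F^{-1}=F$. For each $\psi\in H$ the left translation $L_\psi\colon\varphi'\mapsto\psi\varphi'$ is a homeomorphism by Lemma~\ref{Lem:Continuous}(1) (inverse $L_{\psi^{-1}}$), and $L_\psi(H)=\psi H=H$, so $\psi F=L_\psi(F)=F$; in particular $\psi\alpha\in F$ for all $\psi\in H$, $\alpha\in F$. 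Then, fixing $\alpha\in F$, the right translation $R_\alpha\colon\varphi'\mapsto\varphi'\alpha$ is continuous by Lemma~\ref{Lem:Continuous}(2) and satisfies $R_\alpha(H)=\{\psi\alpha\mid\psi\in H\}\subseteq F$, so $H\subseteq R_\alpha^{-1}(F)$ and hence $F=\overline{H}\subseteq R_\alpha^{-1}(F)$, i.e. $F\alpha\subseteq F$; as $\alpha$ was arbitrary, $F\cdot F\subseteq F$. Together with $\mathrm{id}=\varphi^0\in F$ and $F^{-1}=F$ this shows $F$ is a subgroup.

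Next I would prove that $F$ is abelian, and this is where the failure of joint continuity forces a two-step argument. Fixing $i\in\mathbb{Z}$ and writing $\psi=\varphi^i$, the commutator map $c_\psi\colon\varphi'\mapsto\psi\varphi'\psi^{-1}(\varphi')^{-1}$ is continuous: exactly as in the proof of Lemma~\ref{Lem:Continuous}, for any $A$-birational map $f$ inducing a morphism $A\to\Bir(X)$ the $A$-birational map $(\mathrm{id}_A\times\psi)\circ f\circ(\mathrm{id}_A\times\psi^{-1})\circ f^{-1}$ induces $a\mapsto\psi f_a\psi^{-1}f_a^{-1}$, so preimages of closed sets under $c_\psi$ are closed. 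Since $H$ is abelian, $c_\psi(\varphi^j)=\mathrm{id}$ for all $j$, whence $H\subseteq c_\psi^{-1}(\{\mathrm{id}\})$ and therefore $F=\overline{H}\subseteq c_\psi^{-1}(\{\mathrm{id}\})$; thus $\psi$ commutes with every element of $F$, and as $i$ was arbitrary every element of $H$ commutes with every element of $F$. Finally, fixing $\alpha\in F$, the map $c'_\alpha\colon\varphi'\mapsto\alpha\varphi'\alpha^{-1}(\varphi')^{-1}$ is continuous for the same reason (via $(\mathrm{id}_A\times\alpha)\circ f\circ(\mathrm{id}_A\times\alpha^{-1})\circ f^{-1}$), and the previous step gives $c'_\alpha(\varphi^j)=\alpha\varphi^j\alpha^{-1}\varphi^{-j}=\varphi^j\alpha\alpha^{-1}\varphi^{-j}=\mathrm{id}$ for all $j$; hence $H\subseteq(c'_\alpha)^{-1}(\{\mathrm{id}\})$ and so $F\subseteq(c'_\alpha)^{-1}(\{\mathrm{id}\})$, i.e. $\alpha$ commutes with every element of $F$. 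As $\alpha$ was arbitrary, $F$ is abelian.

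The main obstacle is exactly the commutativity step: one cannot obtain it in a single application of a ``commutator map is continuous'' argument, because $(\alpha,\beta)\mapsto\alpha\beta\alpha^{-1}\beta^{-1}$ is not continuous as a function of two variables on $\Bir(X)$. One is forced to fix one argument at a time and to propagate the vanishing of commutators from the dense subgroup $H$ to all of $F$ using that $\{\mathrm{id}\}$ is closed, first deducing ``$H$ is central in $F$'' and then bootstrapping to ``$F$ is central in $F$''. A minor additional point is that the one-variable commutator maps $c_\psi$ and $c'_\alpha$ do not literally appear among the five maps of Lemma~\ref{Lem:Continuous}, so one should re-run the $A$-family construction of its proof to check that they are continuous.
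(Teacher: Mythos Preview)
Your proof is correct and follows essentially the same route as the paper's: use the one-variable continuity results of Lemma~\ref{Lem:Continuous} to show first that $F$ is stable under inversion and left translation by elements of $H$, then bootstrap to $F\cdot F\subseteq F$, and finally prove commutativity by the same two-step commutator argument (first $H$ central in $F$, then $F$ central in $F$) using that $\{\mathrm{id}\}$ is closed. The only cosmetic differences are that the paper packages the second step for multiplication as the closed set $M=\bigcap_{f\in F}Ff^{-1}$ and the second commutativity step as $S=\bigcap_{f\in F}C(f)$, and it obtains the commutator map directly as the composite of maps (4) and (2) of Lemma~\ref{Lem:Continuous} rather than by re-running the $A$-family argument; your explicit remark that $c_\psi$ does not literally appear in the lemma is a fair point and your justification is fine.
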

\begin{proof}
The argument is the same as for algebraic groups or topological groups, and follows from Lemma~\ref{Lem:Continuous}, which gives the properties needed for the proof.
Let us recall how it works.

$1)$ For each~$j\in \mathbb{Z}$, the set~$\varphi^jF$ is a closed subset of~$\Bir(X)$ which contains~$\{\varphi^i\mid i\in \mathbb{Z}\}$, and contains thus~$F$. This implies that~$\varphi^jF=F$ for each~$j\in \mathbb{Z}$.

$2)$ Let us write~$M=\{\psi \in \Bir(X)\mid \psi F\subset F\}=\bigcap\limits_{f\in F} Ff^{-1}$. Since~$M$ is closed and contains~$\{\varphi^i\mid i\in \mathbb{Z}\}$,~$M$ contains~$F$. This shows that~$F$ is closed under composition.

$3)$ Similarly, the set~$I=\{\psi^{-1}\mid \psi \in F\}$ is closed in~$\Bir(X)$ and contains~$\{\varphi^i\mid i\in \mathbb{Z}\}$; hence it contains~$F$. The set~$F$ is then a subgroup of~$\Bir(X)$.

$4)$ It remains to see that~$F$ is abelian. 

We denote by~$C(\mu)=\{\psi\in \Bir(X)\mid \psi\mu=\mu\psi\}$ the centraliser of an element~$\mu\in \Bir(X)$. Note that~$C(\mu)$ is the preimage of the identity by the continuous map~$\Bir(X)\to \Bir(X)$ which sends~$\psi$ onto~$\psi\mu\psi^{-1}\mu^{-1}$. A point of~$\Bir(X)$ being closed by definition of the topology, this shows that~$C(\mu)$ is closed.

Because~$C(\varphi)$ is a closed subgroup of~$\Bir(X)$ which contains~$\{\varphi^i\mid i\in \mathbb{Z}\}$, it contains~$F$, so each element of~$F$ commutes with~$\varphi$. 

Finally,  we write~$S=\{\psi \in \Bir(X)\mid \psi f=f\psi \mbox{ for each } f\in F\}=\bigcap\limits_{f\in F} C(f)$, which is again closed, contains~$\{\varphi^i\mid i\in \mathbb{Z}\}$, and thus contains~$F$. This shows that~$F$ is abelian.
\end{proof}
\subsection{Reminders of results of \cite{BFannals}}
Recall the following natural construction associated to~$\Bir(\p^n)$ (which is \cite[Definition 2.3]{BFannals}):
\begin{definition}\label{DefWHG}
Let~$d$ be a positive integer.
\begin{enumerate}
\item
We define~$W_d$ to be the set of equivalence classes of non-zero ~$(n+1)$-uples~$(h_0,\dots,h_n)$
of homogeneous polynomials~$h_i\in \k[x_0,\dots,x_n]$ of degree~$d$,
where~$(h_0,\dots,h_n)$ is equivalent to~$(\lambda h_0,\dots,\lambda h_n)$ for any~$\lambda\in \k^{*}$.
The equivalence class of~$(h_0,\dots,h_n)$ will be denoted by~$(h_0:\dots:h_n)$.
\item
We define~$H_d\subseteq W_d$ to be the set of elements~$h=(h_0:\dots:h_n)\in W_d$
such that the rational map
$\psi_h\colon \p^n\dasharrow \p^n$ given by~$(x_0:\dots:x_n)\dashmapsto
(h_0(x_0,\dots,x_n):\dots:h_n(x_0,\dots,x_n))$ is birational.
We denote by~$\pi_d$ the map~$H_d\to \Bir(\p^n_\k)$ which sends~$h$ onto~$\psi_h$.
\end{enumerate}
\end{definition}
It follows from the construction that~$W_d$ is a projective space and that~$\pi_d(H_d)=\Bir(\p^n)_{\le d}$. Moreover, we have the following properties:
\begin{proposition}\label{Prop:StructureHd}\cite[Lemma 2.4, Corollaries 2.7 and 2.9]{BFannals}\ 

$1)$ The set~$H_d\subset W_d$ is locally closed, and is thus an algebraic variety.

$2)$ The map~$\pi_d\colon H_d\to \Bir(\p^n)$ is a morphism. It yields a map~$H_d\to \Bir(\p^n)_{\le d}$ which is surjective, closed and continuous. In particular, it is a topological quotient map.

$3)$ A subset~$F\subset \Bir(\p^n)$ is closed if and only if~$(\pi_d)^{-1}(F)$ is closed in~$H_d$ for each~$d$.
\end{proposition}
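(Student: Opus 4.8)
The plan is to prove the three assertions in the logically convenient order $(1)$, then the easy half of $(2)$, then $(3)$, and finally the closedness in $(2)$; the topology on $\Bir(\p^n)$ is only ever accessed through Definitions~\ref{Defi:Family} and~\ref{defi: Zariski topology}. For $(1)$, I would first carve out the open locus $D_d\subseteq W_d$ of tuples $h$ for which $\psi_h$ is dominant: dominance fails precisely when the image of $\psi_h$ lies in a hypersurface, equivalently when the generic rank of the Jacobian of $(h_0,\dots,h_n)$ drops below $n$, and this drop is the simultaneous vanishing of finitely many polynomials in the coefficients of $h$, so $D_d$ is open and $H_d\subseteq D_d$. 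To cut out the birational locus inside $D_d$ I would invoke the classical bound $\deg(\psi_h^{-1})\le d^{n-1}$: for each $e\le d^{n-1}$ the composition $\psi_{h'}\circ\psi_h$ is computed by substituting the $h_i$ into the $h'_j$, so setting $H''_k=h'_k(h_0,\dots,h_n)$ the condition $\psi_{h'}\circ\psi_h=\mathrm{id}$ is defined in $D_d\times W_e$ by the vanishing of the minors $H''_ix_j-H''_jx_i$ together with the non-vanishing of the tuple $(H''_0,\dots,H''_n)$, a locally closed condition. Projecting to $D_d$ and taking the union over $e\le d^{n-1}$ presents $H_d$ as a \emph{constructible} set. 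The genuine difficulty of the whole proposition is to improve this to \emph{locally closed}: I would stratify $H_d$ by the exact degree $e$ of the (unique) inverse and use this uniqueness to make the projection from the incidence set $\{(h,h')\}$ bijective onto each stratum, then deduce via a normalisation / Zariski's-main-theorem argument that $H_d$ is open in its closure inside $D_d$. This is the step I expect to be hardest.

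For $(2)$, the universal assignment $(h,x)\dashmapsto (h,(h_0(x):\dots:h_n(x)))$ is an $H_d$-birational self-map of $H_d\times\p^n$ inducing $h\mapsto\psi_h$, so $\pi_d$ is a morphism in the sense of Definition~\ref{Defi:Family} by construction. Surjectivity onto $\Bir(\p^n)_{\le d}$ is immediate: a map of degree $e\le d$ has a reduced representative of degree $e$ which, multiplied by a common form of degree $d-e$, gives a point of $H_d$ with the same image. Continuity is built into Definition~\ref{defi: Zariski topology}, since a closed $F$ has closed preimage under \emph{every} morphism, in particular under $\pi_d$.

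With continuity in hand I would next prove $(3)$, since the closedness in $(2)$ will be deduced from it. One direction is the continuity just noted. For the converse, assume $\pi_d^{-1}(F)$ is closed in $H_d$ for all $d$, and let $a\mapsto f_a$ be any morphism $A\to\Bir(\p^n)$ arising from an $A$-birational map $f$; I must show $\{a: f_a\in F\}$ is closed in $A$. The components of $f$ have $x$-degree bounded by some integer $D$ (well defined since $A$ is irreducible), so after clearing denominators in $a$ the rule $a\mapsto (f_0(a,\cdot):\dots:f_n(a,\cdot))$ is a morphism of varieties $g\colon A\to H_D$ with $\pi_D\circ g=(a\mapsto f_a)$; here one uses that each $f_a$ is a genuine, hence nonzero, birational map, so $g$ is everywhere defined and lands in $H_D$. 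Then $\{a: f_a\in F\}=g^{-1}(\pi_D^{-1}(F))$ is closed because $g$ is continuous and $\pi_D^{-1}(F)$ is closed by hypothesis.

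Finally I return to the closedness of $\pi_d$, which by $(3)$ amounts to showing, for every closed $C\subseteq H_d$ and every $d'$, that $\pi_{d'}^{-1}(\pi_d(C))$ is closed in $H_{d'}$. Let $\overline C$ be the closure of $C$ in the projective space $W_d$, and set $\overline Z=\{(h',h)\in H_{d'}\times\overline C: h'_ih_j=h'_jh_i\ \text{for all}\ i,j\}$, a closed subset. Since $\overline C$ is complete, the projection $H_{d'}\times\overline C\to H_{d'}$ is proper, hence closed, so the image of $\overline Z$ is closed in $H_{d'}$. The vanishing of all $h'_ih_j-h'_jh_i$ forces $(h'_0:\dots:h'_n)$ and $(h_0:\dots:h_n)$ to be proportional over $\k(x)$, i.e. $\psi_h=\psi_{h'}$; as $h'\in H_{d'}$ this map is birational, whence $h\in H_d$ and therefore $h\in\overline C\cap H_d=C$. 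Thus the image of $\overline Z$ equals $\pi_{d'}^{-1}(\pi_d(C))$, which is consequently closed. Having shown $\pi_d$ to be surjective, continuous and closed, it is a topological quotient map by elementary point-set topology, completing $(2)$.
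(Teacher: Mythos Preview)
The paper does not give its own proof of this proposition: it is stated with a direct citation to \cite[Lemma~2.4, Corollaries~2.7 and~2.9]{BFannals}, so there is nothing in the present paper to compare your argument against. Your sketch has broadly the right architecture, and in particular your $\overline{Z}$-argument for the closedness of $\pi_d$ (via the incidence locus $\{h'_ih_j=h'_jh_i\}$ inside $H_{d'}\times\overline{C}$ and properness of the projection) is essentially the device used in the original reference. Two steps, however, are real gaps rather than routine omissions.

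\medskip
\noindent\textbf{Part~$(1)$.} Your characterisation of dominance through the generic rank of the Jacobian is false in positive characteristic (the Frobenius $(x_0^p:\dots:x_n^p)$ is dominant with identically zero Jacobian), and the paper works over an arbitrary algebraically closed field. More importantly, you rightly flag the passage from \emph{constructible} to \emph{locally closed} as the crux, but the proposed stratify-by-degree-of-inverse and normalisation/ZMT argument does not do the job: a bijective projection from the incidence $\{(h,h')\}$ onto a stratum does not by itself force that stratum to be open in its closure inside $W_d$, nor does it control how the strata for different $e$ fit together. In \cite{BFannals} this step is handled by a direct analysis of which tuples with a common factor lie in $H_d$.

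\medskip
\noindent\textbf{Part~$(3)$.} You claim that an arbitrary morphism $\rho\colon A\to\Bir(\p^n)$, coming from an $A$-birational map $f$, lifts to a morphism of varieties $g\colon A\to H_D$. This is not true in general. Writing $f=(f_0:\dots:f_n)$ with $f_i\in\mathcal O(A)[x_0,\dots,x_n]_D$ on an affine chart, it can happen that all $f_i(a_0,\cdot)$ vanish identically at some $a_0\in A$ even though $f_{a_0}$ is a perfectly good birational map (necessarily represented by a \emph{different} tuple); your sentence ``each $f_a$ is a genuine, hence nonzero, birational map, so $g$ is everywhere defined'' conflates the map $f_{a_0}$ with its chosen representative. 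In \cite{BFannals} one only obtains such a lift after passing to a suitable locally closed stratification of $A$, and the deduction of $(3)$ then relies on the already-established closedness of $\pi_d$; in other words the logical order there is $(1)$, then all of $(2)$, then $(3)$. Since your proof of closedness in $(2)$ invokes $(3)$, this gap propagates back to $(2)$ as well.
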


We also have the following description of algebraic subgroups of~$\Bir(\p^n)$:

\begin{proposition}\label{Prop:EquivAlgSub}\cite[Corollary 2.18 and Lemma 2.19]{BFannals}\ 

A subgroup of~$\Bir(\p^n)$ is an algebraic subgroup if and only if it is closed and of bounded degree. 
\end{proposition}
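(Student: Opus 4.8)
The plan is to prove the two implications separately, using the presentation $\pi_d\colon H_d\to\Bir(\p^n)_{\le d}$ of Proposition~\ref{Prop:StructureHd} as the bridge between the abstract Zariski topology and classical algebraic geometry. Recall that an algebraic subgroup is a subgroup $G$ carrying the structure of an algebraic variety, such that the inclusion is a morphism in the sense of Definition~\ref{Defi:Family} and such that multiplication and inversion are morphisms of varieties. The implication \emph{algebraic $\Rightarrow$ bounded degree} is the easy half: the inclusion morphism comes from a $G$-birational map $f\colon G\times\p^n\dashrightarrow G\times\p^n$ whose second component is given by homogeneous polynomials $h_0,\dots,h_n$ in $x_0,\dots,x_n$ of one fixed degree $d$, with coefficients that are rational functions on the finite-type variety $G$. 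For every point $a\in G$ the map $f_a=\iota(a)$ is then defined by polynomials of degree $\le d$, so $G\subseteq\Bir(\p^n)_{\le d}$.

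For \emph{algebraic $\Rightarrow$ closed} I would mimic the classical proof that a constructible subgroup of an algebraic group is closed. The family $f$ gives a rational map $G\dashrightarrow H_d$ lifting $\iota$ (regular wherever $\iota(a)$ has degree exactly $d$), so by Chevalley's theorem the image $G$ is, after passing through $\pi_d$, a constructible subset, and hence contains a dense open subset $O$ of its closure $\overline G$. Translating $O$ by the elements of $G$---each translation being a homeomorphism by Lemma~\ref{Lem:Continuous}---shows that $G$ is open in $\overline G$; an open subgroup is closed, its complement being a union of open cosets, so $G=\overline G$. Closedness in all of $\Bir(\p^n)$ then follows from the criterion of Proposition~\ref{Prop:StructureHd}(3).

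The substantive direction is \emph{closed and bounded $\Rightarrow$ algebraic}. Let $G$ be closed with $G\subseteq\Bir(\p^n)_{\le d}$. Since $\pi_d$ is continuous and $G$ is closed, $\widetilde G:=\pi_d^{-1}(G)$ is a closed subvariety of $H_d$. The key observation is that over the locus $\{\deg=d\}$---an \emph{open} subset of $H_d$, because the degree drops only on the closed set where $\gcd(h_0,\dots,h_n)$ is non-constant---a birational map of degree exactly $d$ has a representative unique up to scalar, so $\pi_d$ restricts to an injection, indeed an isomorphism onto its image, there. Choosing $d$ so that a suitable irreducible component $C$ of $\widetilde G$ carries generic degree $d$, the dense open $C\cap\{\deg=d\}$ is thereby identified by $\pi_d$ with a constructible piece of $G$, which acquires the structure of an algebraic variety. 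This circumvents the central nuisance, namely that composing or inverting the defining polynomial tuples raises their degree and forces the removal of a common factor---an operation that is only rational, not a morphism, and whose fibres over the lower-degree locus are positive-dimensional.

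The remaining task, and the main obstacle, is to promote this structure---defined only on a dense chunk and with merely rationally-defined operations---to a genuine algebraic group structure on $G$. Composition and inversion of polynomial tuples are morphisms $H_d\times H_d\to H_{d^2}$ and $H_d\to H_e$ (for the classical bound $e$ on the degree of the inverse) followed by the rational reduction back to $H_d$; restricted to the degree-$d$ chunk they assemble into a \emph{birational group law}. I would then invoke Weil's theorem on the regularisation of birational group laws (the group-chunk theorem) to produce an algebraic group $\overline G$ together with a birational map to the chunk intertwining the operations. Since a dense open subset generates a connected group in finitely many steps, $\overline G$ is identified as an abstract group with the identity component of $G$ (the finitely many remaining cosets being handled by translation), and one checks, using the family $f$, that the inclusion $\overline G\hookrightarrow\Bir(\p^n)$ is a morphism. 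Verifying the hypotheses of the group-chunk theorem---that the rational operations are defined on dense opens, are generically associative, and that the chunk generates $G$---is the technical heart; the degree-jump and common-factor phenomenon is exactly what defeats a naive quotient construction and makes the birational approach necessary.
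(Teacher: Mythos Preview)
The paper does not prove this proposition at all: it is stated as a citation of \cite[Corollary~2.18 and Lemma~2.19]{BFannals} and is used as a black box throughout. There is therefore no ``paper's own proof'' to compare your attempt against.

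That said, your sketch is broadly in line with how the cited reference proceeds. The easy direction (an algebraic subgroup has bounded degree, since the defining family is given by polynomials of a fixed degree, and is closed, via a constructibility/translation argument) is essentially as you describe. For the hard direction, \cite{BFannals} does indeed construct the algebraic-group structure on a closed bounded-degree subgroup by passing through~$H_d$ and invoking a Weil-type regularisation of the rational group law; your identification of the degree-drop/common-factor phenomenon as the reason one cannot simply take a naive quotient is exactly the point. One place where your outline would need tightening is the closedness argument: the lift $G\dashrightarrow H_d$ is only defined on the degree-$d$ locus, so to get constructibility of the full image you must stratify by degree and assemble the pieces, and the ``open subgroup is closed'' step requires knowing that $\overline G$ is itself a group (Corollary~\ref{Cor:ClosureSubgroup} handles the cyclic case, but the general case needs the same translation trick applied once more). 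These are routine but should be made explicit in a full write-up.
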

\subsection{Algebraicity and boundedness of the degree sequence}
\begin{proposition}\label{Prop:SeqBoundedOrNot}
Let~$\varphi\in \Bir(\p^n)$. 

$1)$ If the sequence~$\{\deg(\varphi^m)\}_{m\in \mathbb{N}}$ is bounded, then~$\overline{\{\varphi^i\mid i\in \mathbb{Z}\}}$ is a commutative algebraic subgroup of~$\Bir(\p^n)$.

$2)$ If the sequence~$\{\deg(\varphi^m)\}_{m\in \mathbb{N}}$ is unbounded, then~$\varphi$ is not contained in any algebraic subgroup of~$\Bir(\p^n)$.
\end{proposition}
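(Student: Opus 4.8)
The plan is to treat the two parts in tandem, since $(2)$ is essentially the contrapositive combined with a monotonicity observation. First I would establish part $(2)$: suppose $\varphi$ lies in an algebraic subgroup $G\subseteq\Bir(\p^n)$. By Proposition~\ref{Prop:EquivAlgSub}, $G$ is closed and of bounded degree, say $G\subseteq\Bir(\p^n)_{\le d}$. Since $G$ is a subgroup, every power $\varphi^m$ lies in $G$, hence $\deg(\varphi^m)\le d$ for all $m$, so the sequence is bounded. This is the easy direction and needs no further work.

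For part $(1)$, assume $\{\deg(\varphi^m)\}_{m\in\mathbb{N}}$ is bounded; set $F=\overline{\{\varphi^i\mid i\in\mathbb{Z}\}}$. By Corollary~\ref{Cor:ClosureSubgroup}, $F$ is a closed abelian subgroup of $\Bir(\p^n)$, so by Proposition~\ref{Prop:EquivAlgSub} it suffices to show $F$ has bounded degree. The natural approach: first note that the hypothesis bounds $\deg(\varphi^m)$ for $m\ge 0$; I would argue it also bounds $\deg(\varphi^{-m})$, either by a general fact that a bounded forward orbit forces a bounded backward orbit (using that $\deg(\psi)=\deg(\psi^{-1})$ is \emph{false} in general, so instead) — more carefully, I would invoke that if $\deg(\varphi^m)\le c$ for all $m\ge 1$, then the Zariski closure of $\{\varphi^m\mid m\ge 1\}$ is already contained in the closed set $\Bir(\p^n)_{\le c}$, which by Proposition~\ref{Prop:StructureHd}$(1)$ is an algebraic variety; then apply the continuity of $\psi\mapsto\psi^{-1}$ (Lemma~\ref{Lem:Continuous}, case $3$ with $m=-1$) to conclude the inverses of this set also lie in some $\Bir(\p^n)_{\le c'}$, whence $\{\varphi^i\mid i\in\mathbb{Z}\}\subseteq\Bir(\p^n)_{\le\max(c,c')}$.

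Once the orbit $\{\varphi^i\mid i\in\mathbb{Z}\}$ is contained in the closed subset $\Bir(\p^n)_{\le D}$ for some $D$, its closure $F$ is as well, so $F$ is a closed subgroup of bounded degree, and Proposition~\ref{Prop:EquivAlgSub} finishes the proof. The main obstacle I anticipate is the passage from boundedness of the \emph{forward} sequence $\{\deg(\varphi^m)\}_{m\in\mathbb{N}}$ to a bound on all powers $\varphi^i$, $i\in\mathbb{Z}$: one must be careful that $\deg$ is not symmetric under inversion, so the cleanest route is the topological one above, using that $\Bir(\p^n)_{\le c}$ is closed (Proposition~\ref{Prop:StructureHd}) together with continuity of inversion from Lemma~\ref{Lem:Continuous} — the set $\{\varphi^{-m}\mid m\ge 1\}$ is the image under a continuous map of a set whose closure is contained in $\Bir(\p^n)_{\le c}$, and one checks this image has bounded degree. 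Everything else is a routine assembly of results already in the excerpt.
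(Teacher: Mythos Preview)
Your treatment of part $(2)$ is fine and matches the paper exactly.

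For part $(1)$, the gap is precisely the step you flag as ``the main obstacle'' and then do not actually resolve. Continuity of the inversion map $\psi\mapsto\psi^{-1}$ (Lemma~\ref{Lem:Continuous}) tells you only that preimages of closed sets are closed; it says nothing about the \emph{degree} of the images. From the fact that $\{\varphi^m\mid m\ge 1\}\subset\Bir(\p^n)_{\le c}$ and that inversion is continuous, you cannot conclude that $\{\varphi^{-m}\mid m\ge 1\}$ lies in any $\Bir(\p^n)_{\le c'}$: the image of a closed set under a homeomorphism is closed, but closed subsets of $\Bir(\p^n)$ need not have bounded degree (e.g.\ $\Bir(\p^n)$ itself). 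Your aside that $\Bir(\p^n)_{\le c}$ ``is an algebraic variety'' is also not correct---Proposition~\ref{Prop:StructureHd}$(1)$ says $H_c$ is a variety, but $\Bir(\p^n)_{\le c}$ is only its topological quotient---and in any case that would not help the argument as written.

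The paper closes this gap with a single explicit inequality: $\deg(\varphi^{-m})\le (\deg(\varphi^m))^{n-1}$, citing \cite[Theorem~1.5]{BCW}. This immediately bounds the degrees of all negative powers, so $\{\varphi^i\mid i\in\mathbb{Z}\}\subset\Bir(\p^n)_{\le d}$ for some $d$; the rest of your assembly (closure stays in $\Bir(\p^n)_{\le d}$, Corollary~\ref{Cor:ClosureSubgroup}, Proposition~\ref{Prop:EquivAlgSub}) is then correct. If you wanted to avoid the explicit bound, a workable alternative is to observe that $h\mapsto(\pi_c(h))^{-1}$ defines a \emph{morphism} $H_c\to\Bir(\p^n)$ in the sense of Definition~\ref{Defi:Family} (since the inverse of an $A$-birational map is again one), and any such morphism from a variety has image of bounded degree; but that is not what your proposal says.
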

\begin{proof}
Proposition~\ref{Prop:EquivAlgSub} directly yields~$2)$. Let us prove~$1)$. 

We suppose then that~$\{\deg(\varphi^m)\}_{m\in \mathbb{N}}$ is bounded. Because~$\deg(\varphi^{-m})\le (\deg(\varphi^m))^{n-1}$ for each~$m$ (\cite[Theorem 1.5, page 292]{BCW}), the set 
$\{\varphi^i\}_{i\in \mathbb{Z}}$ is contained in~$\Bir(\p^n)_{\le d}$ for some~$d$. The closure~$F$ of ~$\{\varphi^i\mid i\in \mathbb{Z}\}$ in~$\Bir(\p^n)$ is then again contained in~$\Bir(\p^n)_{\le d}$ . By Corollary~\ref{Cor:ClosureSubgroup},~$F$ is a commutative subgroup of~$\Bir(\p^n)$ and is then a commutative algebraic subgroup of~$\Bir(\p^n)$ (Proposition~\ref{Prop:EquivAlgSub}).
\end{proof}
\begin{corollary}\label{Cor:AlgSequenceBounded}
Let~$\varphi\in \Bir(\p^n)$. The following are equivalent:

$1)$ The element~$\varphi$ is algebraic.

$2)$ The sequence~$\{\deg(\varphi^m)\}_{m\in \mathbb{N}}$ is bounded.
\end{corollary}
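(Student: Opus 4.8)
The plan is to obtain both implications as essentially immediate consequences of Proposition~\ref{Prop:SeqBoundedOrNot}, so the corollary requires almost no new work: the substantive content has already been extracted in that proposition (and, underneath it, in the cited results of \cite{BFannals} and \cite{BCW}).

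First I would prove $(2)\Rightarrow(1)$. Suppose the sequence $\{\deg(\varphi^m)\}_{m\in\mathbb{N}}$ is bounded. By Proposition~\ref{Prop:SeqBoundedOrNot}$(1)$, the set $F=\overline{\{\varphi^i\mid i\in\mathbb{Z}\}}$ is a commutative algebraic subgroup of $\Bir(\p^n)$. Since $\varphi=\varphi^1\in F$, the element $\varphi$ is contained in an algebraic subgroup of $\Bir(\p^n)$, hence is algebraic by definition. (If one prefers to avoid invoking the closure explicitly, one may instead quote Proposition~\ref{Prop:EquivAlgSub}: $\{\varphi^i\mid i\in\mathbb{Z}\}$ lies in $\Bir(\p^n)_{\le d}$ for some $d$ by \cite[Theorem~1.5]{BCW}, its closure is a subgroup by Corollary~\ref{Cor:ClosureSubgroup}, and a closed subgroup of bounded degree is algebraic.)

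For $(1)\Rightarrow(2)$ I would argue by contraposition. Assume the sequence $\{\deg(\varphi^m)\}_{m\in\mathbb{N}}$ is unbounded. Then Proposition~\ref{Prop:SeqBoundedOrNot}$(2)$ states precisely that $\varphi$ is contained in no algebraic subgroup of $\Bir(\p^n)$, i.e.\ $\varphi$ is not algebraic. This establishes the contrapositive of $(1)\Rightarrow(2)$, and together with the previous paragraph completes the equivalence.

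I do not expect any genuine obstacle here; the only point to be careful about is that "algebraic" is defined via membership in \emph{some} algebraic subgroup, so in $(2)\Rightarrow(1)$ one must actually exhibit such a subgroup (the closure $F$ above serves this purpose), rather than merely noting bounded degree of the cyclic group generated by $\varphi$, which a priori need not itself be closed.
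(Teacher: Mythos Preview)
Your proof is correct and follows exactly the paper's approach: the paper's own proof consists of the single sentence ``Directly follows from Proposition~\ref{Prop:SeqBoundedOrNot},'' and you have simply spelled out the two implications in detail.
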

\begin{proof}
Directly follows from Proposition~\ref{Prop:SeqBoundedOrNot}.
\end{proof}

\begin{lemma}
The Zariski topology  of~$\Bir(\p^n)_{\le d}$ is noetherian, i.e.~every decreasing sequence of closed subsets is eventually stationary.

This is not the case for~$\Bir(\p^n)$.
\end{lemma}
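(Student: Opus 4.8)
The plan is to prove the two assertions separately, both relying on the structure results of Proposition~\ref{Prop:StructureHd}.

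\emph{Noetherianity of $\Bir(\p^n)_{\le d}$.} By Proposition~\ref{Prop:StructureHd}(1), $H_d$ is an algebraic variety, hence a noetherian topological space, and by Proposition~\ref{Prop:StructureHd}(2) the map $\pi_d\colon H_d\to \Bir(\p^n)_{\le d}$ is a surjective topological quotient map. I would exploit that $F\mapsto \pi_d^{-1}(F)$ is then an order-preserving \emph{injection} from the closed subsets of $\Bir(\p^n)_{\le d}$ into the closed subsets of $H_d$: it is well defined since $\pi_d$ is continuous, injective since $\pi_d$ is surjective (so $F=\pi_d(\pi_d^{-1}(F))$), and clearly inclusion-preserving. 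Hence a descending chain $F_1\supseteq F_2\supseteq\cdots$ of closed subsets of $\Bir(\p^n)_{\le d}$ maps to a descending chain $\pi_d^{-1}(F_1)\supseteq \pi_d^{-1}(F_2)\supseteq\cdots$ of closed subsets of the noetherian space $H_d$, which is eventually constant; applying $\pi_d$ and using surjectivity, the chain $(F_i)$ is eventually constant as well.

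\emph{Non-noetherianity of $\Bir(\p^n)$.} I would state this for $n\ge 2$ (for $n=1$ one has $\Bir(\p^1)=\PGL_2(\k)$, an algebraic variety, hence noetherian). The idea is to exhibit an infinite orbit that is closed together with all of its tails. Choose $\varphi\in\Bir(\p^n)$ whose degree sequence tends to infinity; concretely one can take the birational self-map of $\p^n$ extending the H\'enon automorphism $(x_1,\dots,x_n)\mapsto(x_2,\,x_1+x_2^{2},\,x_3,\dots,x_n)$ of $\A^n$, for which $\deg(\varphi^m)=2^m$. In particular $\varphi$ has infinite order, so the powers $\varphi^m$, $m\ge 0$, are pairwise distinct. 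For each $k\ge 0$ put $F_k=\{\varphi^m\mid m\ge k\}\subseteq\Bir(\p^n)$. Fixing $d$, the set $\{m\ge k\mid \deg(\varphi^m)\le d\}$ is finite since $\deg(\varphi^m)\to\infty$, so $\pi_d^{-1}(F_k)=\bigcup_{m\ge k}\pi_d^{-1}(\{\varphi^m\})$ is the union of its finitely many nonempty terms, each of which is closed in $H_d$ because points of $\Bir(\p^n)$ are closed and $\pi_d$ is continuous. By Proposition~\ref{Prop:StructureHd}(3), $F_k$ is closed in $\Bir(\p^n)$. As $\varphi^k\in F_k\setminus F_{k+1}$, the chain $F_0\supsetneq F_1\supsetneq F_2\supsetneq\cdots$ is strictly decreasing and never stabilizes.

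The only step requiring care is checking that each $F_k$ is closed, i.e. that for every $d$ the preimage $\pi_d^{-1}(F_k)$ is a union of \emph{finitely many} point-fibres of $\pi_d$; this is exactly where one uses that $\{\deg(\varphi^m)\}$ tends to infinity rather than merely being unbounded. Everything else — that $H_d$ is noetherian, that a continuous surjection reflects descending chains of closed sets, and the exponential degree growth of the iterates of a H\'enon map — is routine. The main conceptual point is simply to notice that noetherianity constrains only \emph{descending} chains of closed subsets (ascending chains such as $\Bir(\p^n)_{\le 1}\subseteq\Bir(\p^n)_{\le 2}\subseteq\cdots$ are irrelevant), so that a non-algebraic element and its powers furnish precisely the counterexample needed.
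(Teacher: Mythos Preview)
Your argument for noetherianity of $\Bir(\p^n)_{\le d}$ is essentially the paper's: both use that $H_d$ is a noetherian algebraic variety and that $\pi_d$ is a surjective continuous map, so descending chains of closed sets lift to $H_d$ and must stabilise.

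For the failure of noetherianity in $\Bir(\p^n)$ your route differs. The paper simply picks, for each $i\ge 1$, some $\varphi_i\in\Bir(\p^n)$ of degree exactly~$i$ and sets $F_i=\{\varphi_j\mid j\ge i\}$; then $F_i\cap\Bir(\p^n)_{\le d}$ is finite for every $d$, hence closed, and Proposition~\ref{Prop:StructureHd} gives that each $F_i$ is closed. You instead take the forward orbit of a single H\'enon map and use that $\deg(\varphi^m)=2^m$. Both arguments are correct and rest on the same mechanism: a countable set meeting each $\Bir(\p^n)_{\le d}$ in only finitely many points is automatically closed. The paper's version is lighter in that it needs no computation beyond the existence of maps of every degree, whereas yours requires the (classical, but nontrivial) fact that H\'enon iterates have strictly increasing degree; on the other hand your version ties the phenomenon directly to non-algebraic elements, which fits the theme of the paper, and your remark that one needs $\deg(\varphi^m)\to\infty$ rather than mere unboundedness is a genuine subtlety worth recording. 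Your observation that the statement should be read for $n\ge 2$ (since $\Bir(\p^1)=\PGL_2(\k)$ is an algebraic variety) is also a fair clarification.
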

\begin{proof}By  Proposition~\ref{Prop:StructureHd}, we have a map~$\pi_d\colon H_d\to \Bir(\p^n)_{\le d}$, which is surjective, continuous and closed. The topology of~$H_d$ being noetherian (it is an algebraic variety), the same holds for~$\Bir(\p^n)_{\le d}$.

The fact that the topology of~$\Bir(\p^n)$ is not noetherian has already being observed in \cite{PanRit}. It can be shown by taking a sequence~$\{\varphi_i\}_{i\in \mathbb{N}}$ of maps~$\varphi_i$ of degree~$i$. Then~$F_i=\{\varphi_j\mid j\ge i\}$ is closed in~$\Bir(\p^n)$ for each~$i$ (follows from Proposition~\ref{Prop:StructureHd})  but the sequence~$F_1\supset F_2\supset F_3\supset \dots$ is not stationary.
\end{proof}

\begin{proposition}\label{Prop:Union}
For each integers~$k,d\in \mathbb{N}$ let us write~$$\begin{array}{rcl}
\Bir(\p^n)_{k,d}&=&\{f\in \Bir(\p^n)\mid \deg(f^k)\le d\}\\
\Bir(\p^n)_{\infty,d}&=&\{f\in \Bir(\p^n)\mid \deg(f^i)\le d\mbox{ for all }i\in \mathbb{N}\}.\end{array}$$ Then, the following hold:
\begin{enumerate}[$(1)$]
\item
The set~$\Bir(\p^n)_{k,d}$ is closed in~$\Bir(\p^n)$.
\item
The set~$\Bir(\p^n)_{\infty,d}=\bigcap_{i\in \mathbb{N}} \Bir(\p^n)_{i,d}$ is closed in~$\Bir(\p^n)$.
\item
The set~$\Bir(\p^n)_{alg}$ of all algebraic elements of~$\Bir(\p^n)$ is equal to the union of all~$\Bir(\p^n)_{\infty,d}$.
\end{enumerate}
\end{proposition}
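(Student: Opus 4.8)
The plan is to obtain all three statements from ingredients already in place: the continuity of the maps $\varphi\mapsto\varphi^{k}$ (Lemma~\ref{Lem:Continuous}), the closedness of $\Bir(\p^{n})_{\le d}$ in $\Bir(\p^{n})$ (a fact from \cite{BFannals}; by Proposition~\ref{Prop:StructureHd}$(3)$ it amounts to the closedness, in each $H_{e}$ with $e>d$, of the locus where the $n+1$ defining forms acquire a common factor of degree $\ge e-d$), and the characterisation of algebraicity by boundedness of the degree sequence (Corollary~\ref{Cor:AlgSequenceBounded}).

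For $(1)$, fix $k,d\in\mathbb{N}$; if $d=0$ the set is empty because a birational map has degree $\ge 1$, and if $k=0$ it is all of $\Bir(\p^{n})$, so assume $k,d\ge1$. The map $\theta_{k}\colon\Bir(\p^{n})\to\Bir(\p^{n})$, $\varphi\mapsto\varphi^{k}$, is continuous by Lemma~\ref{Lem:Continuous}$(3)$, and straight from the definitions $\Bir(\p^{n})_{k,d}=\{\varphi\mid\varphi^{k}\in\Bir(\p^{n})_{\le d}\}=\theta_{k}^{-1}\bigl(\Bir(\p^{n})_{\le d}\bigr)$. This is the preimage of a closed set under a continuous map, hence closed.

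For $(2)$, unwinding the definition, a map $f$ lies in $\Bir(\p^{n})_{\infty,d}$ exactly when $\deg(f^{i})\le d$ for every $i\in\mathbb{N}$, i.e.\ when $f\in\Bir(\p^{n})_{i,d}$ for all $i$; thus $\Bir(\p^{n})_{\infty,d}=\bigcap_{i\in\mathbb{N}}\Bir(\p^{n})_{i,d}$, an intersection of sets closed by $(1)$, hence closed. For $(3)$, Corollary~\ref{Cor:AlgSequenceBounded} says that $\varphi$ is algebraic if and only if $\{\deg(\varphi^{m})\}_{m\in\mathbb{N}}$ is bounded, say by some $d$; this is precisely the condition $\varphi\in\Bir(\p^{n})_{\infty,d}$. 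Hence $\Bir(\p^{n})_{alg}=\bigcup_{d\in\mathbb{N}}\Bir(\p^{n})_{\infty,d}$, and with $(2)$ this exhibits $\Bir(\p^{n})_{alg}$ as a countable union of closed subsets.

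The whole proof is essentially bookkeeping, so I do not expect a genuine obstacle; the only point asking for a little care is $(1)$, and inside it the closedness of $\Bir(\p^{n})_{\le d}$ — which is exactly where the geometry of the parameter spaces $H_{d}$ (how the degree behaves in algebraic families, as opposed to purely formal topological-group manipulations) enters.
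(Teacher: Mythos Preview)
Your proof is correct and follows essentially the same route as the paper: for $(1)$ you write $\Bir(\p^{n})_{k,d}$ as the preimage of the closed set $\Bir(\p^{n})_{\le d}$ under the continuous $k$-th power map, and $(2)$ and $(3)$ are then immediate from intersections and Corollary~\ref{Cor:AlgSequenceBounded}. Your parenthetical justification of the closedness of $\Bir(\p^{n})_{\le d}$ via Proposition~\ref{Prop:StructureHd}$(3)$ and the treatment of the trivial cases $k=0$, $d=0$ are a bit more explicit than the paper, but the argument is otherwise identical.
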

\begin{proof}
By Proposition~\ref{Prop:StructureHd}, the set~$\Bir(\p^n)_{\le d}$ is closed in~$\Bir(\p^n)$ for each~$d$. The map~$\Bir(\p^n)\to \Bir(\p^n)$ which sends~$\varphi$ onto~$\varphi^k$ being continuous (Lemma~\ref{Lem:Continuous}), this directly shows that 
$\Bir(\p^n)_{k,d}$ is closed in~$\Bir(\p^n)$. This yields~$(1)$, which  implies~$(2)$.

Corollary~\ref{Cor:AlgSequenceBounded} yields the equality~$\Bir(\p^n)_{alg}=\bigcup_{d\in \mathbb{N}} \Bir(\p^n)_{\infty,d}$, which corresponds to~$(3)$.
\end{proof}

\section{Two explicit families}
\label{Sec:Examples}
\subsection{A unipotent example}
\begin{example}\label{TheExample}
For~$n\ge 2$, let ~$\rho\colon \A^1\to \Bir(\p^n)$ be the morphism given by
$$\begin{array}{cccc}
 \A^1\times\p^n&\dasharrow & \A^1\times\p^n\\
(a,[x_0:\dots:x_n])&\dashmapsto& (a,[x_0x_1:x_1(x_1+x_0):x_2(x_1+ax_0):x_3x_1:\dots:x_nx_1])
\end{array}$$
which corresponds on the affine open subset where~$x_0=1$ to the family of birational maps given by
$$(x_1,\dots,x_n)\dashmapsto (x_1+1,x_2\cdot \frac{x_1+a}{x_1},x_3,\dots,x_n).$$
\end{example}
\begin{lemma}The map~$\rho\colon \A^1\to \Bir(\p^n)$ of Example~$\ref{TheExample}$ is a topological embedding.\label{Lem:Topemb1}
\end{lemma}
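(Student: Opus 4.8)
The plan is to reduce the claim to two elementary verifications by exploiting how coarse the Zariski topology of the affine line is. Recall that $\A^1$ carries the cofinite topology, so its only closed subsets are $\A^1$ itself and the finite sets, and that $\Bir(\p^n)$ is a $T_1$ space, since points of $\Bir(\p^n)$ are closed by the definition of the topology (Definition~\ref{defi: Zariski topology}). Hence \emph{any} continuous injection $\alpha\colon\A^1\to\Bir(\p^n)$ is automatically a topological embedding: the image of a finite subset of $\A^1$ is a finite, hence closed, subset of $\Bir(\p^n)$, and therefore relatively closed in $\alpha(\A^1)$, while $\alpha(\A^1)$ itself is trivially closed in the subspace $\alpha(\A^1)$; so $\alpha$ is a closed map onto its image, and being a continuous bijection onto its image it is a homeomorphism onto $\alpha(\A^1)$. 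It thus suffices to check that $\rho$ is continuous and injective.

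Continuity is immediate once one knows that the formulas of Example~\ref{TheExample} define a morphism $\A^1\to\Bir(\p^n)$ in the sense of Definition~\ref{Defi:Family}, because every such morphism is continuous by the very definition of the Zariski topology (Definition~\ref{defi: Zariski topology}). To confirm the morphism property, one checks that the displayed $\A^1$-birational self-map of $\A^1\times\p^n$ really is birational by writing down its inverse, which on the chart $x_0=1$ is the family $(y_1,\dots,y_n)\dashmapsto\bigl(y_1-1,\ y_2(y_1-1)/(y_1-1+a),\ y_3,\dots,y_n\bigr)$; the open sets on which the forward and backward maps restrict to mutually inverse isomorphisms obviously project onto $\A^1$. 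For injectivity I read $\rho(a)$ off the chart $x_0=1$ as the birational self-map $(x_1,\dots,x_n)\dashmapsto\bigl(x_1+1,\ x_2(x_1+a)/x_1,\ x_3,\dots,x_n\bigr)$ of $\A^n$; its second coordinate function equals $x_2 + a\,x_2/x_1$ in $\k(x_1,\dots,x_n)$, and from this $a$ can be recovered (the functions $1$ and $x_2/x_1$ being $\k$-linearly independent), so $a\ne a'$ forces $\rho(a)\ne\rho(a')$ in $\Bir(\p^n)$.

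I do not anticipate a genuine obstacle: once one records that morphisms are continuous and that $\Bir(\p^n)$ is $T_1$, the embedding property is forced by the coarseness of the topology on $\A^1$, and the only computation with content is the trivial one showing that $\rho$ is non-constant. The one point calling for a little care — and which is in any case implicit in the statement, since $\rho$ is already called a morphism — is the preliminary verification that Example~\ref{TheExample} does define a morphism $\A^1\to\Bir(\p^n)$, namely that the indicated family is $\A^1$-birational with inverse of the form written above and that the relevant open subsets of $\A^1\times\p^n$ dominate $\A^1$; this is routine.
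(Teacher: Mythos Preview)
Your argument is correct and is genuinely different from the paper's. The paper proceeds by lifting $\rho$ to a closed embedding $\hat\rho\colon\p^1\hookrightarrow W_2$, observing that $\hat\rho^{-1}(H_2)=\A^1$ so that one gets a closed embedding $\A^1\hookrightarrow H_2$, and then invoking Proposition~\ref{Prop:StructureHd} (closedness and continuity of $\pi_2$) to see that the composite $\A^1\to H_2\to\Bir(\p^n)_{\le 2}$ is a homeomorphism onto its image. You instead short-circuit all of this with the soft observation that a continuous injection from a cofinite space into a $T_1$ space is automatically an embedding; since morphisms are continuous by construction and points of $\Bir(\p^n)$ are closed (as the paper itself records in the proof of Corollary~\ref{Cor:ClosureSubgroup}), nothing more is needed. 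Your route is more elementary and avoids the $H_d$--machinery entirely. The one thing the paper's route buys, and yours does not, is that $\rho(\A^1)$ is \emph{closed} in $\Bir(\p^n)$ (it is the $\pi_2$-image of a closed subset of $H_2$, and $\pi_2$ is a closed map); this closedness is not part of the Lemma as stated, but it is what the main Theorem ultimately uses when it speaks of ``closed subsets $U,S\subset\Bir(\p^n)$''.
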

\begin{proof}The fact that~$\rho$ is injective can be directly checked on the formula given above.
We then consider the closed embedding~$\hat{\rho}\colon\p^1\to W_2$ that sends~$[\mu:\lambda]\in \p^1$ to 
$$[\mu x_0x_1:\mu x_1(x_1+x_0): \mu x_2x_1+\lambda x_2x_0: \mu x_3x_1:\dots: \mu x_nx_1].$$
When~$\mu=0$, this does not give a birational map of~$\p^n$, so ~$\hat{\rho}([0:1])\notin H_2$. However, we have~$\pi_2(\hat{\rho}([1:t]))=\rho(t)$ for each~$t\in \A^1$, so the restriction to~$\A^1$ yields a closed embedding~$\A^1\to H_2$. It remains to show that the restriction of~$\pi_2$ to~$\hat{\rho}(\p^1\setminus [0:1])$ is an homeomorphism, which is given by Proposition~\ref{Prop:StructureHd}.
\end{proof}
\begin{proposition}
The morphism~$\rho\colon \A^1\to \Bir(\p^n)$ of Example~$\ref{TheExample}$ has the following properties:
\begin{enumerate}[$(1)$]
\item
For~$t\in \k$, the following conditions are equivalent:
\begin{enumerate}[$(a)$]
\item
$\rho(t)$ is algebraic;
\item
$\rho(t)$ is unipotent;
\item
$\rho(t)$ is conjugate to~$\rho(0)\colon (x_1,\dots,x_n)\mapsto (x_1+1,x_2,\dots,x_n)$;
\item
$t$ belongs to the subgroup of~$(\k,+)$ generated by~$1$.
\end{enumerate}
\item
The pull-back by~$\rho$ of the set of algebraic elements is not closed if~$\car(\k)=0$.
\end{enumerate}
\end{proposition}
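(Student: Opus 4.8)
### Proof strategy for the Proposition

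The plan is to analyze the family $\rho$ by computing iterates explicitly and relating the dynamics to the one-variable map $x_1 \mapsto x_1 + 1$ on the $x_1$-coordinate together with a cocycle-type product in the $x_2$-coordinate. On the affine chart $x_0 = 1$ the map $\rho(t)$ acts by $(x_1,\dots,x_n) \mapsto (x_1+1,\, x_2 \cdot \frac{x_1+t}{x_1},\, x_3,\dots,x_n)$, so the coordinates $x_3,\dots,x_n$ are fixed and the essential part is two-dimensional. Iterating, one finds by an easy induction that
\[
\rho(t)^m \colon (x_1,x_2) \dashmapsto \Bigl(x_1 + m,\ x_2 \cdot \prod_{j=0}^{m-1} \frac{x_1 + j + t}{x_1+j}\Bigr).
\]
The degree of $\rho(t)^m$ as a birational map of $\p^n$ is governed by the number of distinct linear factors appearing in the numerator and denominator of the rational function in front of $x_2$; these cancel precisely when $t$ is an integer (in the prime subfield sense, i.e.\ $t$ lies in the subgroup of $(\k,+)$ generated by $1$), and otherwise the degree grows linearly in $m$. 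This establishes $(a)\Leftrightarrow(d)$ via Corollary~\ref{Cor:AlgSequenceBounded}: if $t \in \mathbb{Z}\cdot 1$ the product telescopes to something of bounded degree (indeed $\rho(t)^m$ has degree bounded independently of $m$), while if $t \notin \mathbb{Z}\cdot 1$ the factors $x_1+j+t$ and $x_1+k$ are never proportional, no cancellation occurs, and $\{\deg(\rho(t)^m)\}$ is unbounded.

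For the remaining equivalences in $(1)$: $(c) \Rightarrow (b)$ is immediate since $(x_1,\dots,x_n)\mapsto(x_1+1,x_2,\dots,x_n)$ is a unipotent element of the algebraic group $\Aut(\A^n)$ (it generates a $\mathbb{G}_a$), and unipotency is conjugation-invariant by the remarks in the introduction on the intrinsic Jordan decomposition; $(b)\Rightarrow(a)$ is trivial. It remains to prove $(a)$ (equivalently $(d)$) $\Rightarrow (c)$: when $t = k \in \mathbb{Z}\cdot 1$ one must exhibit an explicit birational conjugation bringing $\rho(k)$ to $\rho(0)$. The natural candidate is $(x_1,x_2,\dots,x_n) \dashmapsto (x_1,\, x_2 \cdot g(x_1),\, x_3,\dots,x_n)$ for a suitable rational function $g$ chosen so that $g(x_1+1)/g(x_1) = (x_1+k)/x_1$ up to the telescoping — concretely $g(x_1) = \prod_{j=0}^{k-1}(x_1+j)$ works for $k \geq 0$, with the obvious modification for $k < 0$ — and one checks directly that this conjugates $\rho(k)$ to $\rho(0)$. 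This is where the small bookkeeping lies, but it is a routine verification rather than a genuine obstacle.

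Finally, part $(2)$ follows formally from part $(1)$. When $\car(\k) = 0$ the subgroup of $(\k,+)$ generated by $1$ is exactly $\mathbb{Z} \subset \k = \A^1(\k)$, so by $(1)$ the pull-back $\rho^{-1}(\Bir(\p^n)_{\mathrm{alg}})$ equals $\mathbb{Z} \subset \A^1$. Since $\rho$ is a topological embedding by Lemma~\ref{Lem:Topemb1}, it suffices to observe that $\mathbb{Z}$ is not closed in $\A^1$ with its Zariski topology (an infinite proper subset of the line is never closed), hence the set of algebraic elements is not closed along this family; the same argument with "unipotent" in place of "algebraic" gives the statement about unipotent elements, since by $(1)$ the algebraic and unipotent elements of $\rho(\A^1)$ coincide. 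I expect the main obstacle to be the explicit conjugation in $(a)\Rightarrow(c)$ and, relatedly, carefully justifying the unboundedness of the degree sequence when $t\notin\mathbb{Z}\cdot 1$ — one must argue that the accumulated linear factors genuinely raise the degree of the homogenized map on $\p^n$ and do not conspire to cancel against the homogenizing variable $x_0$.
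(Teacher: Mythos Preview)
Your proposal is correct and follows essentially the same approach as the paper: both compute the iterates explicitly to get the telescoping product in the $x_2$-coordinate, deduce $(a)\Leftrightarrow(d)$ from whether cancellation occurs, exhibit the same explicit conjugating map $(x_1,x_2,\dots)\dashmapsto(x_1,x_2\cdot\prod_{j=0}^{k-1}(x_1+j)^{\pm 1},\dots)$ for $(d)\Rightarrow(c)$, and derive $(2)$ from the fact that $\mathbb{Z}\cdot 1$ is not Zariski-closed in $\A^1$ when $\car(\k)=0$. Your invocation of Lemma~\ref{Lem:Topemb1} in part $(2)$ is not actually needed, since the statement concerns the preimage under $\rho$ rather than the image, but this is harmless.
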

\begin{proof}$(1)$
Proceeding by induction, the iterates of~$\rho(a)$ send~$(x_1,\dots,x_n)$ onto:
$$\begin{array}{ll}
\displaystyle\rho(a)\colon &(x_1+1,x_2\cdot \displaystyle\frac{x_1+a}{x_1},x_3,\dots,x_n),\\
\rho(a)^2\colon &   (x_1+2,x_2\cdot \displaystyle\frac{(x_1+a)(x_1+a+1)}{x_1(x_1+1)},x_3,\dots,x_n),\\
\rho(a)^m\colon &  (x_1+m,x_2\cdot \displaystyle\frac{(x_1+a)(x_1+a+1)\cdots (x_1+a+m-1)}{x_1(x_1+1)\cdots (x_1+m-1)},x_3,\dots,x_n).
\end{array}$$
Then, the second coordinate of~$\rho(a)^m(x_1,\dots,x_n)$ is
$$\frac{\prod_{i=0}^{m-1} (x_1+a+i)}{\prod_{i=0}^{m-1} (x_1+i)}.$$
If~$a$ does not belong to the subgroup of~$(\k,+)$ generated by~$1$, then the denominator and numerators have no common factor, for each~$m\in \mathbb{N}$, so the degree growth of~$\rho(a)^m$ is linear, which implies that~$\rho(a)$ is not algebraic.

If~$a$ belongs to the subgroup of~$(\k,+)$ generated by~$1$, it is equal to~$k\in \mathbb{Z}$, and the degree of~$\{\rho(a)^m\}_{m\in \mathbb{N}}$ is bounded by~$\lvert k\rvert+1$, so~$\rho(a)$ is algebraic. We can moreover see that~$\rho(a)$ is unipotent. Indeed,~$\rho(a)$ is conjugate to 
$$\rho(0)=(x_1,\dots,x_n)\dashmapsto (x_1+1,x_2,x_3,\dots,x_n)$$
by 
$$(x_1,\dots,x_n)\dashmapsto (x_1,\frac{x_2}{x_1(x_1+1)\dots(x_1+a-1)},x_3,\dots,x_n)$$
if~$a=k>0$
or by 
$$(x_1,\dots,x_n)\dashmapsto (x_1,x_2\cdot x_1(x_1-1)\dots(x_1+a),x_3,\dots,x_n)$$
if~$a=k<0$.

Assertion~$(2)$ follows directly from~$(1)$ and the fact that the subgroup of~$(\k,+)$ generated by~$1$ is closed if and only if~$\car(\k)\not=0$.
\end{proof}

\subsection{A semi-simple example}
\begin{example}\label{TheSecondExample}
For~$n\ge 2$, let~$\rho\colon \A^1\times \left(\A^1\setminus\{0\}\right)\to \Bir(\p^n)$ be the morphism given by
$$\begin{array}{l}
\rho(a,\xi)([x_0:x_1:\dots:x_n])=\\ \ \ \ [x_0(x_1+x_0):\xi x_1(x_1+x_0):x_2(x_1+ax_0):x_3(x_1+x_0):\dots:x_n(x_1+x_0)]
\end{array}$$
which corresponds on the affine open subset where~$x_0=1$ to the family of birational maps
$$(x_1,\dots,x_n)\dashmapsto \left(\xi x_1,x_2\cdot \frac{x_1+a}{x_1+1},x_3,\dots,x_n\right).$$
\end{example}

\begin{lemma}The map~$\rho\colon \A^1\to \Bir(\p^n)$ of Example~$\ref{TheSecondExample}$ is a topological embedding.
\end{lemma}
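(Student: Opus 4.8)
The plan is to argue exactly as in Lemma~\ref{Lem:Topemb1}, replacing the curve $\p^1$ there by a compactification of the surface $\A^1\times(\A^1\setminus\{0\})$. First I would check that $\rho$ is injective: from the affine expression $(x_1,\dots,x_n)\dashmapsto\bigl(\xi x_1,\ x_2\cdot\tfrac{x_1+a}{x_1+1},\ x_3,\dots,x_n\bigr)$ one recovers $\xi$ from the first coordinate and $a$ from the rational function $\tfrac{x_1+a}{x_1+1}$ appearing in the second, so distinct pairs $(a,\xi)$ give distinct elements of $\Bir(\p^n)$.

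Next I would construct the analogue of the map $\hat\rho$ of Lemma~\ref{Lem:Topemb1}. Since $\A^1\times(\A^1\setminus\{0\})$ is an open subset of $\A^2$, I would define a morphism $\hat\rho\colon\A^2\to W_2$ by sending $(a,\xi)$ to
\[
\bigl(x_0(x_1+x_0):\xi x_1(x_1+x_0):x_2x_1+ax_2x_0:x_3(x_1+x_0):\dots:x_n(x_1+x_0)\bigr);
\]
the first coordinate is a non-zero polynomial for every $(a,\xi)$, so this is indeed a morphism. The key point is that in suitable affine coordinates on $W_2$ (normalise by the coefficient of $x_0^2$ in the first slot, which is always $1$) every coordinate of $\hat\rho(a,\xi)$ is a fixed constant except for two that equal $\xi$ and one that equals $a$; hence $\hat\rho$ is an isomorphism of $\A^2$ onto an affine-linear, locally closed subvariety of $W_2$ whose closure is a linear plane $\Pi\cong\p^2$. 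I would then check that $\psi_{\hat\rho(a,\xi)}$ is birational precisely when $\xi\neq0$ (when $\xi=0$ the second coordinate vanishes identically, so the map is not dominant), and similarly that the two lines forming $\Pi\setminus\hat\rho\bigl(\A^1\times(\A^1\setminus\{0\})\bigr)$ — the line at infinity of the chart, along which the first coordinate vanishes identically, and the line $\{\xi=0\}$ — are disjoint from $H_2$. It would follow that $\Pi\cap H_2=\hat\rho\bigl(\A^1\times(\A^1\setminus\{0\})\bigr)$, so that $\hat\rho$ restricts to a closed embedding $\A^1\times(\A^1\setminus\{0\})\hookrightarrow H_2$ satisfying $\pi_2\circ\hat\rho=\rho$.

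To conclude I would invoke Proposition~\ref{Prop:StructureHd}: the map $\pi_2\colon H_2\to\Bir(\p^n)_{\le2}$ is continuous and closed, hence so is its restriction to the closed subset $\hat\rho\bigl(\A^1\times(\A^1\setminus\{0\})\bigr)$; this restriction is moreover injective, since $\rho$ is, so it is a homeomorphism onto its image. As $\rho$ is then the composite of this homeomorphism with the topological embedding $\hat\rho$, it is itself a topological embedding.

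The main obstacle, compared with Lemma~\ref{Lem:Topemb1}, is that the parameter space is now a surface rather than a curve, so one cannot simply plug in a complete variety: the naive extension of $\hat\rho$ to $\p^1\times\p^1$ is only a rational map, with an indeterminacy point at which all the homogeneous coordinates degenerate to $0$. The device that keeps the argument elementary is to extend $\hat\rho$ over the whole affine plane $\A^2$, where the formula is already affine-linear in $(a,\xi)$ and the leading coordinate never vanishes, so that $\hat\rho$ is automatically an isomorphism onto a locally closed subvariety of $W_2$; what then remains is the routine bookkeeping identifying $\hat\rho\bigl(\A^1\times(\A^1\setminus\{0\})\bigr)$ with $\Pi\cap H_2$, which yields the closedness in $H_2$ needed to apply Proposition~\ref{Prop:StructureHd}.
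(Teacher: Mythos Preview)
Your proof is correct and follows essentially the same route as the paper. The only cosmetic difference is that the paper writes $\hat\rho$ directly as a closed embedding $\p^2\to W_2$, $[\mu:\eta:\lambda]\mapsto[\mu x_0(x_1+x_0):\lambda x_1(x_1+x_0):x_2(\mu x_1+\eta x_0):\mu x_3(x_1+x_0):\dots]$, whereas you first define it on $\A^2$ and then recover the same projective plane $\Pi\cong\p^2$ as the closure of the image; after that, both arguments identify $\Pi\cap H_2$ with $\{\mu\lambda\neq0\}\cong\A^1\times(\A^1\setminus\{0\})$ and finish by the closedness of $\pi_2$ from Proposition~\ref{Prop:StructureHd}.
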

\begin{proof}The proof is similar to the one of Lemma~\ref{Lem:Topemb1}. The fact that~$\rho$ is injective can be directly checked on the formula given above.
We then consider the closed embedding~$\hat{\rho}\colon\p^2\to W_2$ that sends~$[\mu:\eta:\lambda]\in \p^2$ to 
$$[\mu x_0(x_1+x_0):\lambda x_1(x_1+x_0): x_2(\mu x_1+ \eta x_0):\mu x_3(x_1+x_0):\dots:\mu x_n(x_1+x_0)].$$
These elements correspond to birational maps if and only if~$\mu\lambda\not=0$. Hence, we have a closed embedding~$\A^1\times (\A^1\setminus\{0\})\to H_2$ that sends~$(a,\xi)$ onto~$\hat{\rho}([1:a:\xi])$. Moreover, we have~$\pi_2(\hat{\rho}([1:a:\xi]))=\rho(a,\xi)$. The fact that the restriction of~$\pi_2$ to the image is a homeomorphism is then given by Proposition~\ref{Prop:StructureHd}.
\end{proof}
\begin{proposition}
The morphism~$\rho\colon \A^1\to \Bir(\p^n)$ of Example~$\ref{TheSecondExample}$ has the following properties:
\begin{enumerate}[$(1)$]
\item
For~$(a,\xi)\in \A^1\times (\A^1\setminus \{0\})$, the following conditions are equivalent:
\begin{enumerate}[$(a)$]
\item
$\rho(a,\xi)$ is algebraic;
\item
$\rho(a,\xi)$ is semi-simple;
\item
$\rho(a,\xi)$ is conjugate to~$\rho(1,\xi)\colon (x_1,\dots,x_n)\mapsto (\xi x_1,x_2,\dots,x_n)$;
\item
There exists~$k\in \mathbb{Z}$ such that~$a=\xi^k$.
\end{enumerate}
\item
The pull-back by~$\rho$ of the set of algebraic elements is not closed.
\end{enumerate}
\end{proposition}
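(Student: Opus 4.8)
The plan is to mirror the argument given for the unipotent example: in part~$(1)$ I would prove the cycle of implications $(d)\Rightarrow(c)\Rightarrow(b)\Rightarrow(a)\Rightarrow(d)$, and part~$(2)$ would then follow from the description of the pull-back obtained along the way. The computational core is $(a)\Rightarrow(d)$. First, by induction on $m$ (just as the analogous formula was obtained in the unipotent case), one checks that
$$\rho(a,\xi)^m\colon(x_1,\dots,x_n)\dashmapsto\left(\xi^m x_1,\ x_2\cdot\frac{\prod_{i=0}^{m-1}(\xi^i x_1+a)}{\prod_{i=0}^{m-1}(\xi^i x_1+1)},\ x_3,\dots,x_n\right).$$
The form $\xi^i x_1+a$ is proportional to $\xi^j x_1+1$ precisely when $a=\xi^{i-j}$; hence, if $a\neq\xi^k$ for every $k\in\mathbb{Z}$, the numerator $P$ and denominator $Q$ of the middle coordinate have disjoint root sets, so they are coprime polynomials in $x_1$ of degree $m$ (their leading coefficient $\prod_{i=0}^{m-1}\xi^i$ being nonzero). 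Homogenising, $\rho(a,\xi)^m$ is given by $n+1$ forms of degree $m+1$; the two of them that matter are $x_2\widetilde{P}$ and $x_0\widetilde{Q}$, where $\widetilde{P},\widetilde{Q}\in\k[x_0,x_1]$ are the degree-$m$ homogenisations of $P,Q$, and these forms have no common factor (an irreducible common factor could be neither $x_0$ nor $x_2$, and could not divide both $\widetilde{P}$ and $\widetilde{Q}$ since these are coprime and not divisible by $x_0$). Therefore $\deg(\rho(a,\xi)^m)=m+1$ is unbounded and $\rho(a,\xi)$ is not algebraic, by Corollary~\ref{Cor:AlgSequenceBounded}. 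I would emphasise that this is uniform in $\xi$: even if $\xi$ is a root of unity, the two products share no root once $a\notin\langle\xi\rangle$, so the degree still grows.

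For $(d)\Rightarrow(c)$, if $a=\xi^k$ I would conjugate explicitly. For $k\geq 0$ put $g(x_1)=\prod_{i=1}^{k}(x_1+\xi^i)$ (the empty product when $k=0$); it satisfies $g(\xi x_1)/g(x_1)=\xi^k(x_1+1)/(x_1+\xi^k)$, and a direct check shows that conjugating $\rho(a,\xi)$ by the birational map $(x_1,\dots,x_n)\dashmapsto(x_1,\,x_2\,g(x_1)\,x_1^{-k},\,x_3,\dots,x_n)$ yields $\rho(1,\xi)$ — first turning $\rho(a,\xi)$ into the diagonal automorphism $(\xi x_1,\xi^k x_2,x_3,\dots,x_n)$, then into $(\xi x_1,x_2,\dots,x_n)$. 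For $k<0$ one takes $g(x_1)=\prod_{i=k+1}^{0}(x_1+\xi^i)^{-1}$ instead. Then $(c)\Rightarrow(b)$ is immediate, since $\rho(1,\xi)$ is the diagonal element $\mathrm{diag}(1,\xi,1,\dots,1)$ of $\PGL_{n+1}\subset\Bir(\p^n)$, hence semi-simple, and semi-simplicity is a conjugacy invariant; and $(b)\Rightarrow(a)$ holds because a semi-simple element lies, by construction, in an algebraic subgroup.

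For part~$(2)$, the equivalences of $(1)$ identify the pull-back $\rho^{-1}(\Bir(\p^n)_{\mathrm{alg}})$ with $Z=\bigcup_{k\in\mathbb{Z}}C_k$, where $C_k=\{(a,\xi)\mid a=\xi^k\}$ is a closed irreducible curve and the $C_k$ are pairwise distinct (any two meet in a finite set). The space $S$ is noetherian, being homeomorphic to the variety $\A^1\times(\A^1\setminus\{0\})$ — equivalently, to a closed subset of $\Bir(\p^n)_{\le 2}$, which is noetherian. So if $Z$ were closed it would be a finite union of irreducible closed subsets of $S$; one of them would then contain infinitely many of the curves $C_k$, hence have dimension $2$, hence equal $S$, giving $Z=S$. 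But $Z\neq S$, because for each $\xi$ the fibre of $Z$ is the cyclic group $\langle\xi\rangle$, which is a proper subgroup of $\k^{*}$ (the group $\k^{*}$ is never cyclic). Hence $Z$ is not closed.

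The step I expect to require the most care is $(a)\Rightarrow(d)$: not the iteration formula, which is routine, but the final degree count — making sure that coprimality of the numerator and denominator of the middle coordinate genuinely forces $\deg(\rho(a,\xi)^m)$ to grow, i.e.\ that no common factor can hide among all the homogenised coordinates, and that the argument nowhere tacitly uses that $\xi$ has infinite order.
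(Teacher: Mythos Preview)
Your proposal is correct and follows essentially the same route as the paper: compute the iterates, use coprimality of numerator and denominator to force linear degree growth when $a\notin\langle\xi\rangle$, and exhibit an explicit conjugation to $\rho(1,\xi)$ when $a=\xi^k$. Your treatment is in fact more careful than the paper's in two places --- you verify that the homogenised forms share no common factor (the paper just asserts linear growth), and you actually prove that $\bigcup_{k\in\mathbb{Z}}\{a=\xi^k\}$ is not closed via noetherianity (the paper merely states it).
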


\begin{proof}$(1)$
Proceeding by induction, the iterates of~$\rho(a,\xi)$ send~$(x_1,\dots,x_n)$ onto:
$$\begin{array}{ll}
\displaystyle\rho(a,\xi)\colon &\left(\xi x_1,x_2\cdot \displaystyle\frac{x_1+a}{x_1+1},x_3,\dots,x_n\right),\vspace{0.2cm}\\ 
\rho(a,\xi)^2\colon &   \left(\xi^2x_1,x_2\cdot \displaystyle\frac{(x_1+a)(\xi x_1+a)}{(x_1+1)(\xi x_1+1)},x_3,\dots,x_n\right),\vspace{0.2cm}\\
\rho(a,\xi)^m\colon &  \left(\xi^m x_1,x_2\cdot \displaystyle\frac{(x_1+a)(\xi x_1+a)\cdots (x_1\xi^{m-1}+a)}{(x_1+1)(x_1\xi+1)\cdots (x_1\xi^{m-1}+1)},x_3,\dots,x_n\right).
\end{array}$$
Then, the second coordinate of~$\rho(a,\xi)^m(x_1,\dots,x_n)$ is
$$\frac{\prod_{i=0}^{m-1} (\xi^i x_1+a)}{\prod_{i=0}^{m-1} (\xi^i x_1+1)}.$$

If~$a$ does not belong to the subgroup of~$(\k,\cdot)$ generated by~$\xi$, then the denominator and numerators have no common factor, for each~$m\in \mathbb{N}$, so the degree growth of~$\rho(a,\xi)^m$ is linear, which implies that~$\rho(a,\xi)$ is not algebraic.

If~$a$ belongs to the subgroup of~$(\k,\cdot)$ generated by~$\xi$, it is equal to~$a=\xi^k$ for some~$k\in \mathbb{Z}$, and the degree of~$\{\rho(a,\xi)^m\}_{m\in \mathbb{N}}$ is bounded by~$\lvert k\rvert+1$, so~$\rho(a,\xi)$ is algebraic. We can moreover see that~$\rho(a,\xi)$ is semi-simple. Indeed,~$\rho(a,\xi)$ is conjugate to 
$$\rho(1,\xi)=(x_1,\dots,x_n)\dashmapsto (\xi x_1,x_2,x_3,\dots,x_n)$$
by 
$$(x_1,\dots,x_n)\dashmapsto \left(x_1,x_2\prod_{i=1}^k (\xi^{-i}+\frac{1}{x_1}),x_3,\dots,x_n\right)$$
if~$k>0$
or by
$$(x_1,\dots,x_n)\dashmapsto \left(x_1,\frac{x_2}{\prod_{i=0}^{-k-1} (\xi^{i}+\frac{1}{x_1})},x_3,\dots,x_n\right)$$
if~$k<0$.

Assertion~$(2)$ follows  from~$(1)$ and the fact that the subset of~$\A^1\times (\A^1\setminus \{0\})$ that consists of elements~$(a,\xi)$ such that~$a=\xi^k$ for some~$k\in \mathbb{Z}$ is not closed.
\end{proof}


\begin{thebibliography}{PRR2014}


\bibitem[BCW82]{BCW}  
\textsc{Hyman Bass, Edwin H. Connell, David Wright}:
{\it The Jacobian conjecture: reduction of degree and formal expansion of the inverse,}
Bull. of the A.M.S. {\bf 7} (1982), 287-330.


\bibitem[BF2013]{BFannals}
\textsc{J\'er\'emy Blanc, Jean-Philippe Furter}:
\textit{Topologies and structures of the Cremona groups},
Ann. of Math. \textbf{178} (2013), no. 3, 1173--1198.

\bibitem[Dem1970]{De}  
{\sc Michel Demazure}:
{\it Sous-groupes alg\'ebriques de rang maximum du groupe de Cremona,}
Ann. Sci. \'Ecole Norm. Sup. (4) {\bf 3} (1970), 507-588.

\bibitem[Fur1999]{FurterIt}  
{\sc Jean-Philippe Furter}:
{\it On the degree of iterates of automorphisms
of the affine plane,}
Manuscripta Math. {\bf 98} (1999), no. 2, 183--193.

\bibitem[PR2014]{PanRit}
{\sc Ivan Pan, Alvaro Rittatore:} 
{\it Some remarks about the Zariski topology of the Cremona group.}
http://arxiv.org/abs/1212.5698.




\bibitem[Pop2013]{Popov}
{\sc Vladimir L. Popov:} 
{\it Tori in the Cremona groups.} 
Izv. Math. {\bf 77} (2013), no. 4, 742--771.

\bibitem[Ser2010]{Se}
{\sc Jean-Pierre Serre:} 
{\it Le groupe de Cremona et ses sous-groupes finis.} 
S\'eminaire Bourbaki. Volume 2008/2009. Ast\'erisque No. {\bf 332} (2010), Exp. No. 1000, vii, 75--100.
\end{thebibliography}
\end{document}